\theoremstyle{plain}
\newtheorem{tw}{Theorem}[section]
\newtheorem{lm}[tw]{Lemma}
\newtheorem{cor}[tw]{Corollary}
\theoremstyle{remark}
\newtheorem{df}[tw]{Definition}
\newtheorem{rem}[tw]{Remark}
\DeclareMathOperator{\cha}{char}
\DeclareMathOperator{\jac}{jac}
\DeclareMathOperator{\dgcd}{dgcd}
\DeclareMathOperator{\Irr}{Irr}
\DeclareMathOperator{\Sqf}{Sqf}
\DeclareMathOperator{\Img}{Im}
\author{Piotr J\k{e}drzejewicz, Janusz Zieli\'nski}
\title{Analogs of Jacobian conditions for subrings}
\date{}
\begin{document}

\maketitle

\begin{abstract}
We present a generalization of the Jacobian Conjecture
for $m$ polynomials in $n$ variables:
$f_1,\dots,f_m\in k[x_1,\dots,x_n]$,
where $k$ is a field of characteristic zero
and $m\in\{1,\dots,n\}$.
We express the generalized Jacobian condition
in terms of irreducible and square-free elements
of the subalgebra $k[f_1,\dots,f_m]$.
We also discuss obtained properties in a~more general setting
-- for subrings of unique factorization domains.
\end{abstract}

\begin{table}[b]\footnotesize\hrule\vspace{1mm}
Keywords: Jacobian Conjecture, irreducible element, 
square-free element.\\
2010 Mathematics Subject Classification:
Primary 13F20, Secondary 14R15, 13N15.
\end{table}

\section*{Introduction}

The Jacobian Conjecture asserts that
if $k$ is a field of characteristic zero
and polynomials $f_1,\dots,f_n\in k[x_1,\dots,x_n]$
satisfy the Jacobian condition
$$\jac(f_1,\dots,f_n)\in k\setminus\{0\}\leqno (1)$$
(where $\jac$ denotes the Jacobian determinant),
then $k[f_1,\dots,f_n]=k[x_1,$ $\dots,$ $x_n]$.
In terms of endomorphisms of the polynomial algebra
$k[x_1,\dots,x_n]$: if a~$k$-endomorphism $\varphi$
satisfies the Jacobian condition
$$\jac(\varphi(x_1),\dots,\varphi(x_n))\in k\setminus\{0\},
\leqno (2)$$ then $\varphi$ is an automorphism.
For more information on the Jacobian Conjecture
we refer the reader to van den Essen's book \cite{EssenBook}.

\bigskip

In Section \ref{conjecture} we present and discuss
the following generalization of the Jacobian Conjecture,
denoted by $\text{JC}(m,n,k)$,
where $k$ is a field of characteristic zero,
$n$ is a positive integer, $m\in\{1,\dots,n\}$
and $\jac$ denotes the Jacobian determinant with respect
to given variables:
\begin{quote}
{\em "For arbitrary polynomials
$f_1,\dots,f_m\in k[x_1,\dots,x_n]$, if}
\end{quote}
$$\gcd\big(\jac^{f_1,\dots,f_m}_{x_{i_1},\dots,x_{i_m}},\;
1\leqslant i_1<\ldots<i_m\leqslant n\big)\in k\setminus\{0\},
\leqno (3)$$
\begin{quote}
{\em then $k[f_1,\dots,f_m]$ is a ring of constants
of some $k$-derivation of $k[x_1,\dots,x_n]$."}
\end{quote}

\medskip

This conjecture can be expressed in terms of polynomial
homomorphisms (and algebraic closedness) in the following way:
\begin{quote}
{\em "For every $k$-homomorphism
$\varphi\colon k[x_1,\dots,x_n]\rightarrow k[x_1,\dots,x_m]$, if
$$\gcd\big(\jac^{\varphi(x_1),\dots,\varphi(x_m)}_{x_{i_1},
\dots,x_{i_m}},\;1\leqslant i_1<\ldots<i_m\leqslant n\big)\in
k\setminus\{0\},$$
then $\Img\varphi$ is algebraically closed in $k[x_1,\dots,x_n]$."}
\end{quote}

\medskip

One of the authors obtained in \cite{keller} a characterization
of endomorphisms satisfying the Jacobian condition $(2)$,
where $k$ is a field of characteristic zero,
as mapping irreducible polynomials to square-free ones.
De Bondt and Yan proved in \cite{BondtYan} that
mapping square-free polynomials to square-free ones
is also equivalent to $(2)$.
We can express it in terms of polynomials
$f_1=\varphi(x_1)$, $\dots$, $f_n=\varphi(x_n)$:
condition $(1)$ holds if and only if all irreducible
(resp.\ all square-free) elements of the ring $k[f_1,\dots,f_n]$
are square-free in the ring $k[x_1,\dots,x_n]$.
In Theorem \ref{t2} we generalize this fact
for $m$ polynomials $f_1,\dots,f_m\in k[x_1,\dots,x_n]$,
where $m\in\{1,\dots,n\}$.
Namely, the generalized Jacobian condition $(3)$
is equivalent to each of the following ones:
$$\begin{array}{l}
\Irr k[f_1,\dots,f_m]\subset\Sqf k[x_1,\dots,x_n],\\[1ex]
\Sqf k[f_1,\dots,f_m]\subset\Sqf k[x_1,\dots,x_n],
\end{array}\leqno (4)$$
where $\Irr$ and $\Sqf$ denote the sets of irreducible
and square-free elements of the respective ring.
This fact is a consequence of a multidimensional
generalization of Freudenburg's lemma (\cite{FreudenburgNote})
obtained in Theorem~\ref{t1}.
A~presentation of succeding generalizations of this lemma
can be found in the Intoduction to \cite{charpbases}.

\bigskip

The above conjecture motivates us in Section \ref{analogs}
to consider the following properties for a~subring $R$
of a~unique factorization domain $A$:
$$\Irr R\subset\Sqf A,\quad \Sqf R\subset\Sqf A.\leqno (5)$$
In Theorem \ref{t3}, under some additional assumptions,
we express the second condition in a kind of factoriality:
$$\text{{\em "For every}}\;x\in A,\;y\in\Sqf A,\;
\text{{\em if}}\;x^2y\in R\setminus\{0\},\;
\text{{\em then}}\;x,y\in R."\leqno (6)$$
We call a subring $R$ satisfying condition $(6)$
square-factorially closed in~$A$.
In Theorem \ref{t4} we show that, under the same assumptions,
square-factorially closed subrings are root closed.

\section{\boldmath A generalization of the Jacobian Conjecture
for $m$ polynomials in $n$ variables}
\label{conjecture}

Let $k$ be a field of characteristic zero.
By $k[x_1,\dots,x_n]$ we denote the $k$-algebra
of polynomials in $n$ variables.

\medskip

Recall from \cite{jaccond} the following notion of
a "differential gcd" for $m$ polynomials
$f_1,\dots,f_m\in k[x_1,\dots,x_n]$, $m\in\{1,\dots,n\}$:
$$\dgcd(f_1,\dots,f_m)=
\gcd\big(\jac^{f_1,\dots,f_m}_{x_{i_1},\dots,x_{i_m}},\;
1\leqslant i_1<\ldots<i_m\leqslant n\big),$$
where $\jac^{f_1,\dots,f_m}_{x_{i_1},\dots,x_{i_m}}$
denotes the Jacobian determinant of $f_1$, $\dots$, $f_m$
with respect to $x_{i_1}$, $\dots$, $x_{i_m}$.
For $m=n$ we have
$$\dgcd(f_1,\dots,f_n)\sim\jac(f_1,\dots,f_n),$$
for $m=1$ we have
$$\dgcd(f)\sim\gcd\left(\dfrac{\partial f}{\partial x_1},\dots,
\dfrac{\partial f}{\partial x_n}\right),$$
where $g\sim h$ means that polynomials $g$ and $h$
are associated.
We put $\dgcd(f_1,\dots,f_m)=0$
if $\jac^{f_1,\dots,f_m}_{x_{i_1},\dots,x_{i_m}}=0$
for every $i_1$, $\dots$, $i_m$, that is,
$f_1$, $\dots$, $f_m$ are algebraically dependent over $k$.

\medskip

Let $k$ be a field and let $A$ be a $k$-algebra.
A $k$-linear map $d\colon A\to A$ such that
$d(fg)=d(f)g+fd(g)$ for $f,g\in A$,
is called a {\em $k$-derivation} of $A$.
The kernel of $d$ is denoted by $A^d$
and called the {\em ring of constants} of $d$.
For more information on derivations and their rings
of constants we refer the reader to Nowicki's book
\cite{Npolder}.

\medskip

Consider the following conjecture for $m$ polynomials
in $n$ variables.

\medskip

\noindent
{\bf\boldmath Conjecture $\text{JC}(m,n,k)$.}
{\em For arbitrary polynomials
$f_1,\dots,f_m\in k[x_1,$ $\dots,$ $x_n]$,
where $k$ is a field of characteristic zero
and $m\in\{1,\dots,n\}$,
if $$\dgcd(f_1,\dots,f_m)\in k\setminus\{0\},$$
then $k[f_1,\dots,f_m]$ is a ring of constants
of some $k$-derivation of $k[x_1,\dots,x_n]$.}

\medskip

Recall Nowicki's characterization of rings of constants
(\cite{Nrings}, Theorem~5.4,
\cite{Npolder}, Theorem~4.1.4, p.\ 47).

\medskip

\noindent
{\bf Theorem (Nowicki, 1994).}
{\em Let $A$ be a finitely generated $k$-domain,
where $k$ is a field of characteristic zero.
Let $R$ be a $k$-subalgebra of~$A$.
The following conditions are equivalent:

\medskip

\noindent
{\rm (i)} \
$R$ is a ring of constants of some $k$-derivation of $A$,

\medskip

\noindent
{\rm (ii)} \
$R$ is integrally closed in $A$ and $R_0\cap A=R$.}

\medskip

Let $D$ be a family of $k$-derivations
of a finitely generated $k$-domain $A$,
where $k$ is a field of characteristic zero.
It follows from Nowicki's Theorem that
the ring $$A^D=\bigcap_{d\in D}A^d$$
is a ring of constants of some single $k$-derivation of $A$
(\cite{Nrings}, Theorem~5.5,
\cite{Npolder}, Theorem~4.1.5, p.\ 47).

\medskip

Daigle observed (\cite{DaigleBook}, 1.4) that
condition {\rm (ii)} of Nowicki's Theorem
can be shortened to the following form:

\medskip

\noindent
{\rm (iii)} \
{\em $R$ is algebraically closed in $A$.}

\medskip

Now we see for example that conjecture $\text{JC}(2,3,k)$ 
asserts that if polynomials $f,g\in k[x,y,z]$ 
satisfy the condition
$$\gcd\big(\jac^{f,\,g}_{x,\,y},\jac^{f,\,g}_{x,\,z},
\jac^{f,\,g}_{y,\,z}\big)\in k\setminus\{0\},$$
then $k[f,g]$ is algebraically closed in $k[x,y,z]$.

\medskip

Let us note some basic observations according to
conjecture $\text{JC}(m,n,k)$.

\medskip

\begin{lm}
\label{l2}
$\text{\rm JC}(m,n,k)$ implies the Jacobian Conjecture
for $m$ variables over $k$.
\end{lm}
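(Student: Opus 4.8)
The plan is to begin with polynomials $g_1,\dots,g_m\in k[y_1,\dots,y_m]$ satisfying the Jacobian condition $\jac(g_1,\dots,g_m)\in k\setminus\{0\}$, and to manufacture from them an instance of $\text{JC}(m,n,k)$ by viewing the $m$-variable situation as embedded inside $n$ variables (where $n\geqslant m$ is arbitrary). Identifying $y_1,\dots,y_m$ with $x_1,\dots,x_m$, I would set $f_i=g_i(x_1,\dots,x_m)\in k[x_1,\dots,x_n]$ for $i=1,\dots,m$; these polynomials simply ignore the extra variables $x_{m+1},\dots,x_n$. The first task is to verify that the hypothesis of $\text{JC}(m,n,k)$ holds for this choice, namely that $\dgcd(f_1,\dots,f_m)\in k\setminus\{0\}$.

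For the $\dgcd$ computation I would note that $\partial f_j/\partial x_i=0$ whenever $i>m$, so any minor $\jac^{f_1,\dots,f_m}_{x_{i_1},\dots,x_{i_m}}$ whose index set $\{i_1,\dots,i_m\}$ meets $\{m+1,\dots,n\}$ has a zero column and hence vanishes. The unique index set contained in $\{1,\dots,m\}$ is $\{1,\dots,m\}$ itself, for which $\jac^{f_1,\dots,f_m}_{x_1,\dots,x_m}=\jac(g_1,\dots,g_m)\in k\setminus\{0\}$. Therefore $\dgcd(f_1,\dots,f_m)$ is the gcd of one nonzero constant together with a collection of zeros, which is, up to associates, that nonzero constant; in particular $\dgcd(f_1,\dots,f_m)\in k\setminus\{0\}$.

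Invoking $\text{JC}(m,n,k)$, I then conclude that $k[f_1,\dots,f_m]$ is a ring of constants of some $k$-derivation of $k[x_1,\dots,x_n]$, equivalently (by Daigle's form (iii) of Nowicki's Theorem) that $k[f_1,\dots,f_m]$ is algebraically closed in $k[x_1,\dots,x_n]$. Since $k[f_1,\dots,f_m]\subseteq k[x_1,\dots,x_m]\subseteq k[x_1,\dots,x_n]$ and algebraic closedness is inherited by intermediate subrings (an element of $k[x_1,\dots,x_m]$ algebraic over $k[f_1,\dots,f_m]$ is in particular an element of $k[x_1,\dots,x_n]$ with the same property), the subring $k[f_1,\dots,f_m]$ is algebraically closed already in $k[x_1,\dots,x_m]$.

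Finally I would convert algebraic closedness into the desired equality via a transcendence-degree argument. Because $\jac(g_1,\dots,g_m)\neq0$ in characteristic zero, the $g_i$ are algebraically independent over $k$, so $\trdeg_k k(g_1,\dots,g_m)=m=\trdeg_k k(x_1,\dots,x_m)$; consequently each $x_i$ with $1\leqslant i\leqslant m$ is algebraic over the field of fractions of $k[f_1,\dots,f_m]$. As $x_i\in k[x_1,\dots,x_m]$ and $k[f_1,\dots,f_m]$ is algebraically closed therein, it follows that $x_i\in k[f_1,\dots,f_m]$, whence $k[x_1,\dots,x_m]\subseteq k[f_1,\dots,f_m]$ and thus equality. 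Translating back through the identification gives $k[g_1,\dots,g_m]=k[y_1,\dots,y_m]$, which is exactly the Jacobian Conjecture in $m$ variables. The main obstacle is this last bridge: $\text{JC}(m,n,k)$ only yields algebraic closedness of the image, a priori weaker than surjectivity, and it is precisely the equality of transcendence degrees forced by the unit Jacobian that upgrades algebraic closedness to the full equality.
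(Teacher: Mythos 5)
Your proposal is correct and follows essentially the same route as the paper: embed the $m$-variable data into $n$ variables, observe that all minors involving the extra variables vanish so that $\dgcd(f_1,\dots,f_m)=\jac^{f_1,\dots,f_m}_{x_1,\dots,x_m}\in k\setminus\{0\}$, apply $\text{JC}(m,n,k)$ together with the Nowicki--Daigle equivalence to get algebraic closedness of $k[f_1,\dots,f_m]$ in $k[x_1,\dots,x_n]$, restrict it to $k[x_1,\dots,x_m]$, and conclude equality from algebraic independence. Your final transcendence-degree argument merely makes explicit the step the paper compresses into ``because $f_1,\dots,f_m$ are algebraically independent over $k$,'' so there is no substantive difference.
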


\begin{proof}
Assume that $\text{JC}(m,n,k)$ holds and consider
polynomials $f_1,\dots,f_m\in k[x_1,\dots,x_m]$ such that
$\jac^{f_1,\dots,f_m}_{x_1,\dots,x_m}\in k\setminus\{0\}$.

\medskip

In $k[x_1,\dots,x_n]$ we have $\dgcd(f_1,\dots,f_m)=
\jac^{f_1,\dots,f_m}_{x_1,\dots,x_m}$,
so $k[f_1,\dots,f_m]$ is algebraically closed
in $k[x_1,\dots,x_n]$ by $\text{JC}(m,n,k)$.
Hence, $k[f_1,\dots,f_m]$ is algebraically closed
in $k[x_1,\dots,x_m]$.
And then $k[f_1,\dots,f_m]=k[x_1,\dots,x_m]$, because
$f_1,\dots,f_m$ are algebraically independent over $k$.
\end{proof}

Now, recall from \cite{Njacobian} and \cite{NNagata} that
a polynomial $f\in k[x_1,\dots,x_n]$ over a field $k$
is called {\em closed} if the ring $k[f]$
is integrally closed in $k[x_1,\dots,x_n]$.
When $\cha k=0$, a polynomial $f$ is closed
if and only if $k[f]$ is a ring of constants
of some $k$-derivation of $k[x_1,\dots,x_n]$
(\cite{Njacobian}, Theorem 2.1,
\cite{Npolder}, Theorem~7.1.4, p.\ 80).
Necessary and sufficient conditions for a polynomial
to be closed were collected and completed by Arzantsev
and Petravchuk in \cite{ArzhantsevPetravchuk}, Theorem 1.
Ayad proved (\cite{Ayad}, Proposition 14) that a polynomial
$f\in k[x,y]$, where $\cha k=0$, is closed if
$$\gcd\left(\dfrac{\partial f}{\partial x},
\dfrac{\partial f}{\partial y}\right)
\in k\setminus\{0\}.$$
His proof can be generalized to $n$ variables
(it was noted in \cite{closed}, Proposition 4.2),
so we obtain the following fact.

\medskip

\noindent
{\bf Theorem (Ayad, 2002).}
$\text{\rm JC}(1,n,k)$ {\em is true.}

\medskip

\begin{rem}
\label{r1}
The reverse implication in $\text{JC}(m,n,k)$
need not to be true if $m<n$.
\end{rem}

In this case $n\geqslant 2$.
As an example we may consider a polynomial
$f_1=x_1^2x_2\in k[x_1,\dots,x_n]$ and, if $m>1$,
polynomials $f_2=x_3$, $\dots$, $f_m=x_{m+1}$.

\medskip

We have
$$\dgcd(x_1^2x_2,x_3,\dots,x_{m+1})=\gcd(2x_1x_2,x_1^2)=x_1,$$
so $\dgcd(x_1^2x_2,x_3,\dots,x_{m+1})\not\in k\setminus\{0\}$.

\medskip

On the other hand, $k[x_1^2x_2,x_3,\dots,x_{m+1}]$
is algebraically closed in $k[x_1,$ $\dots,$ $x_n]$
as a ring of constants of a family of derivations
$$\left\{x_1\dfrac{\partial}{\partial x_1}
-2x_2\dfrac{\partial}{\partial x_2},
\dfrac{\partial}{\partial x_{m+2}},\dots,
\dfrac{\partial}{\partial x_n}\right\}.$$

\section{Analogs of Jacobian conditions in terms of
irreducible and square-free elements}
\label{analogs}

If $R$ is a commutative ring with unity, then $R^{\ast}$
denotes the set of non-invertible elements of $R$.
An element $a\in R$ is called {\em square-free} if
it cannot be presented in the form $a=b^2c$,
where $b,c\in R$ and $b\not\in R^{\ast}$.
By $\Irr R$ we denote the set of irreducible elements
of $R$ and by $\Sqf R$ we denote the set of square-free
elements of $R$.

\medskip

Let $k$ be a field of characteristic zero.
Consider arbitrary polynomials $f_1,\dots,f_m\in
k[x_1,\dots,x_n]$, where $m\in\{1,\dots,n\}$.
Let $g\in k[x_1,\dots,x_n]$ be an irreducible polynomial.

\medskip

The following lemma is a natural generalization of
\cite{keller}, Lemma 3.1.
For the proof it is enough to add the argument from
the beginning of the proof of
\cite{charpbases}, Proposition 3.4.a with $Q=(g)$.

\begin{lm}
\label{l1}
For a given $i\in\{1,\dots,m\}$ consider the following condition:
$$\begin{array}{l}
\mbox{there exist $s_1,\dots,s_m\in k[x_1,\dots,x_n]$,
where $g\nmid s_i$, such that}\\
\mbox{$g\mid s_1d(f_1)+\ldots+s_md(f_m)$
for every $k$-derivation $d$ of $k[x_1,\dots,x_n]$.}
\end{array}\leqno (\ast)$$

\medskip

\noindent
{\bf a)}
Then $g\mid\dgcd(f_1,\dots,f_m)$ if and only if
condition $(\ast)$ holds for some $i\in\{1,\dots,m\}$.

\medskip

\noindent
{\bf b)}
If, for a given $i\in\{1,\dots,m\}$,
condition $(\ast)$ holds,
then $\overline{f_i}$ is algebraic over the field
$k(\,\overline{f_1},\dots,\overline{f_{i-1}},
\overline{f_{i+1}},\dots,\overline{f_m}\,)$.
\end{lm}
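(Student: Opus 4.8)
The plan is to reformulate $(\ast)$ over the domain $B=k[x_1,\dots,x_n]/(g)$, which is a domain because $g$ is irreducible, hence prime, in the UFD $k[x_1,\dots,x_n]$. Write $\overline{h}$ for the image of $h$ in $B$ and let $L=\mathrm{Frac}(B)$. Since a $k$-derivation $d$ of $k[x_1,\dots,x_n]$ is an arbitrary combination $\sum_j d(x_j)\,\partial/\partial x_j$, testing $(\ast)$ on $d=\partial/\partial x_j$ shows that $(\ast)$ for index $i$ is equivalent to the existence of $s_1,\dots,s_m$ with $g\nmid s_i$ and $\sum_{l=1}^m \overline{s_l}\,\overline{\partial f_l/\partial x_j}=0$ in $B$ for every $j$; conversely, these relations give $g\mid\sum_l s_l d(f_l)$ for all $d$. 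In other words, writing $J$ for the $m\times n$ matrix over $B$ with entries $\overline{\partial f_l/\partial x_j}$, condition $(\ast)$ for some $i$ says precisely that $J$ has a nonzero left null vector $(\overline{s_1},\dots,\overline{s_m})\in B^m$.

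For part a) I would translate $g\mid\dgcd(f_1,\dots,f_m)$ into the statement that every $m\times m$ minor of $J$ vanishes in $B$, since these minors are exactly the reductions $\overline{\jac^{f_1,\dots,f_m}_{x_{i_1},\dots,x_{i_m}}}$. Over the field $L$, and because $m\leqslant n$, the vanishing of all maximal minors of $J$ is equivalent to $\mathrm{rank}_L J<m$, which is equivalent to $L$-linear dependence of the $m$ rows of $J$, i.e.\ to the existence of a nonzero left null vector over $L$. Clearing denominators turns such a vector into one over $B$ (nonzero, so with some nonzero entry $\overline{s_i}$, whence $g\nmid s_i$ after lifting), and conversely a nonzero null vector over $B$ forces the rows to be dependent over $L$; this yields both implications of a).

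For part b) the device is the standard characterization of algebraicity by derivations in characteristic zero: an element $t\in L$ is algebraic over a subfield $K$ with $k\subseteq K\subseteq L$ if and only if $D(t)=0$ for every $k$-derivation $D\colon L\to L$ with $D|_K=0$. I apply this with $t=\overline{f_i}$ and $K=K_i=k(\overline{f_1},\dots,\overline{f_{i-1}},\overline{f_{i+1}},\dots,\overline{f_m})$; since a $k$-derivation kills $K_i$ exactly when it kills each generator $\overline{f_l}$ with $l\neq i$, it suffices to take an arbitrary $k$-derivation $D$ of $L$ with $D(\overline{f_l})=0$ for all $l\neq i$ and show $D(\overline{f_i})=0$. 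The chain rule gives $D(\overline{f_l})=\sum_j \overline{\partial f_l/\partial x_j}\,D(\overline{x_j})$ for each $l$, so pairing the relations from $(\ast)$ against the vector $\bigl(D(\overline{x_1}),\dots,D(\overline{x_n})\bigr)$ yields
\[
0=\sum_{j}\Big(\sum_{l}\overline{s_l}\,\overline{\partial f_l/\partial x_j}\Big)D(\overline{x_j})=\sum_{l}\overline{s_l}\,D(\overline{f_l})=\overline{s_i}\,D(\overline{f_i}),
\]
the last equality because $D(\overline{f_l})=0$ for $l\neq i$. As $\overline{s_i}\neq 0$ and $L$ is a field, $D(\overline{f_i})=0$, as required.

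The routine parts are the passage to $B$ and the minors-versus-rank bookkeeping; the only point needing care is part b), where one must work with derivations of $L=\mathrm{Frac}\bigl(k[x_1,\dots,x_n]/(g)\bigr)$ rather than of $k[x_1,\dots,x_n]$. The pleasant observation is that this causes no trouble: one never needs to decide which $\partial/\partial x_j$ descend to $B$, because the argument pairs the algebraic relation $(\ast)$ against an \emph{arbitrary} abstract $k$-derivation $D$ of $L$, and the requirement that $D$ be well defined is already built into $D$ being a genuine derivation of $L$. The main obstacle is thus merely to invoke the derivation criterion for algebraicity correctly, which is legitimate since $\cha k=0$ makes every finitely generated extension separably generated.
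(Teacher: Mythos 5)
Your proof is correct and complete: the reduction of $(\ast)$ modulo the prime ideal $(g)$ to a left null-vector condition on the Jacobian matrix over $B=k[x_1,\dots,x_n]/(g)$, the minors-versus-rank equivalence over $\mathrm{Frac}(B)$ for part a), and the characteristic-zero derivation criterion for algebraicity for part b) are all applied legitimately. This is essentially the same route the paper takes, except that the paper gives it only by citation (the generalization of Lemma 3.1 of the Keller-maps paper together with the rank argument modulo $Q=(g)$ from the $p$-bases paper), so your write-up simply makes that outsourced argument self-contained.
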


Note the following consequence of Lemma \ref{l1}
and \cite{keller}, Lemma 3.2.b
(where the polynomial $w$ can be obtained as irreducible).

\begin{cor}
\label{c1}
If $g\mid\dgcd(f_1,\dots,f_m)$, then $g\mid w(f_1,\dots,f_m)$
for some irreducible polynomial $w\in k[x_1,\dots,x_m]$.
\end{cor}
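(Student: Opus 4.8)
The plan is to combine the two parts of Lemma~\ref{l1} with the irreducibility refinement recorded in \cite{keller}, Lemma~3.2.b. Throughout, for $h\in k[x_1,\dots,x_n]$ let $\overline{h}$ denote the image of $h$ in the ring $k[x_1,\dots,x_n]/(g)$; since $g$ is irreducible and $k[x_1,\dots,x_n]$ is a UFD, the ideal $(g)$ is prime and this quotient is an integral domain, so the bar notation of Lemma~\ref{l1}b makes sense.

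First I would apply Lemma~\ref{l1}a. Since $g\mid\dgcd(f_1,\dots,f_m)$ by hypothesis, condition $(\ast)$ holds for some index $i\in\{1,\dots,m\}$. This already absorbs the degenerate case in which $f_1,\dots,f_m$ are algebraically dependent over $k$ and $\dgcd(f_1,\dots,f_m)=0$, since then $g\mid 0$ holds trivially and no separate treatment is needed. Next I would feed this index into Lemma~\ref{l1}b, which yields that $\overline{f_i}$ is algebraic over the field $k(\overline{f_1},\dots,\overline{f_{i-1}},\overline{f_{i+1}},\dots,\overline{f_m})$.

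From this algebraicity I would deduce a genuine polynomial relation. The field $k(\overline{f_1},\dots,\overline{f_m})$ then has transcendence degree at most $m-1$ over $k$, i.e.\ the elements $\overline{f_1},\dots,\overline{f_m}$ are algebraically dependent over $k$. By the standard characterization of algebraic dependence there is a nonzero polynomial $w\in k[x_1,\dots,x_m]$ with $w(\overline{f_1},\dots,\overline{f_m})=0$ in $k[x_1,\dots,x_n]/(g)$, which is exactly the statement that $g\mid w(f_1,\dots,f_m)$ in $k[x_1,\dots,x_n]$. Finally I would invoke \cite{keller}, Lemma~3.2.b to replace $w$ by an irreducible polynomial: factoring $w=c\,w_1\cdots w_r$ into a unit and irreducible factors in the UFD $k[x_1,\dots,x_m]$ and substituting gives $w(f_1,\dots,f_m)=c\,w_1(f_1,\dots,f_m)\cdots w_r(f_1,\dots,f_m)$, and since $g$ is prime it must divide one of the factors $w_j(f_1,\dots,f_m)$; taking this $w_j$ as the desired $w$ completes the argument.

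The step I expect to demand the most care is the passage in the third paragraph: Lemma~\ref{l1}b asserts only the algebraicity of the single image $\overline{f_i}$ over the subfield generated by the remaining images, whereas the conclusion requires a polynomial relation \emph{with coefficients in the base field $k$} in all $m$ arguments simultaneously. The resolution is that ``algebraic over the subfield generated by the others'' is equivalent to ``algebraically dependent over $k$,'' and the latter is what produces a relation $w\in k[x_1,\dots,x_m]$ with coefficients in $k$ rather than in the larger field; one must be careful to use this equivalence rather than the defining relation of $\overline{f_i}$, whose coefficients a priori lie in the extension.
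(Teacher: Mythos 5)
Your proposal is correct and takes essentially the same route as the paper, which proves the corollary by combining Lemma~\ref{l1} (parts a and b) with \cite{keller}, Lemma~3.2.b, noting that the polynomial $w$ there can be taken irreducible. Your third and fourth paragraphs (algebraicity of $\overline{f_i}$ implies algebraic dependence over $k$, hence a nonzero relation $w$ with $g\mid w(f_1,\dots,f_m)$, then passing to an irreducible factor via primality of $g$) simply spell out the content of that cited lemma together with its irreducibility refinement.
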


The following theorem is a multidimensional generalization
of Freudenburg's lemma (\cite{FreudenburgNote}).

\begin{tw}
\label{t1}
Let $k$ be a field of characteristic zero,
let $f_1,\dots,f_m\in k[x_1,$ $\dots,x_n]$ be arbitrary
polynomials, where $m\in\{1,\dots,n\}$,
and let $g\in k[x_1,$ $\dots,$ $x_n]$ be an irreducible polynomial.
The following conditions are equivalent:

\medskip

\noindent
{\rm (i)} \
$g\mid\dgcd(f_1,\dots,f_m)$,

\medskip

\noindent
{\rm (ii)} \
$g^2\mid w(f_1,\dots,f_m)$ for some irreducible polynomial
$w\in k[x_1,\dots,x_m]$,

\medskip

\noindent
{\rm (iii)} \
$g^2\mid w(f_1,\dots,f_m)$ for some square-free polynomial
$w\in k[x_1,\dots,x_m]$.
\end{tw}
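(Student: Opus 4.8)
The plan is to prove the cycle (ii) $\Rightarrow$ (iii) $\Rightarrow$ (i) $\Rightarrow$ (ii). Throughout I work in the domain $B=k[x_1,\dots,x_n]/(g)$ (recall $g$ is irreducible), write $\overline{h}$ for the image of $h$ in $B$ and $L$ for the fraction field of $B$, and I use the chain rule
\[
d\bigl(w(f_1,\dots,f_m)\bigr)=\sum_{j=1}^m\Bigl(\tfrac{\partial w}{\partial x_j}\Bigr)(f_1,\dots,f_m)\,d(f_j),
\]
valid for every $w\in k[x_1,\dots,x_m]$ and every $k$-derivation $d$. Two translations organise everything: $g\mid w(f_1,\dots,f_m)$ says exactly that $w(\overline{f_1},\dots,\overline{f_m})=0$ in $B$, and, writing $w(f_1,\dots,f_m)=g\,r$, the stronger $g^2\mid w(f_1,\dots,f_m)$ says in addition that $\overline r=0$. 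Moreover condition (i) is precisely the statement that the reduced Jacobian matrix $\overline J=\bigl(\overline{\partial f_j/\partial x_l}\bigr)_{j,l}$ has rank $<m$ over $L$, since its $m\times m$ minors are the images of the determinants $\jac^{f_1,\dots,f_m}_{x_{i_1},\dots,x_{i_m}}$ whose gcd is $\dgcd(f_1,\dots,f_m)$. A Jacobian-criterion estimate will be used repeatedly: $\operatorname{rank}_L\overline J\leqslant\trdeg_k k(\overline{f_1},\dots,\overline{f_m})+1$, with equality precisely when the row vector $\overline{\nabla g}$ lies in the row space of $\overline J$.

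Implication (ii) $\Rightarrow$ (iii) is immediate, an irreducible polynomial being square-free. For (iii) $\Rightarrow$ (i), set $s_j=(\partial w/\partial x_j)(f_1,\dots,f_m)$; since $g^2\mid w(f_1,\dots,f_m)$, differentiating gives $g\mid s_1d(f_1)+\dots+s_md(f_m)$ for every $d$. If $g\nmid s_i$ for some $i$ then condition $(\ast)$ holds and Lemma \ref{l1}a yields (i). Otherwise $g$ divides every $s_j$, so $(\overline{f_1},\dots,\overline{f_m})$ is a common zero of $w$ and of all its first partials; as $w$ is square-free the singular locus of $\{w=0\}$ has codimension at least one in that hypersurface, which forces $\trdeg_k k(\overline{f_1},\dots,\overline{f_m})\leqslant m-2$, and the estimate above then gives $\operatorname{rank}_L\overline J\leqslant m-1$, i.e.\ (i).

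The core is (i) $\Rightarrow$ (ii). By Corollary \ref{c1} there is an irreducible $w\in k[x_1,\dots,x_m]$ with $g\mid w(f_1,\dots,f_m)$; write $w(f_1,\dots,f_m)=g\,r$ and put $t_j=(\partial w/\partial x_j)(f_1,\dots,f_m)$. Applying the chain rule with $d=\partial/\partial x_l$ and reducing mod $g$ yields the key identity $\overline t\,\overline J=\overline r\,\overline{\nabla g}$ over $L$, where $\overline{\nabla g}\neq0$ because $\operatorname{char}k=0$. The goal is $\overline r=0$. When $\overline{\nabla g}$ does \emph{not} lie in the row space of $\overline J$, the left-hand side, being a combination of the rows of $\overline J$, can equal a multiple of $\overline{\nabla g}$ only if that multiple vanishes, so $\overline r=0$ and $w$ itself witnesses (ii). By the estimate this is automatic whenever $f_1,\dots,f_m$ satisfy only a single relation modulo $g$: there $\operatorname{rank}_L\overline J=(m-1)+[\,\overline{\nabla g}\in\text{row space}\,]$, and (i) forces the bracket to vanish.

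The main obstacle is the remaining high-codimension case, where $f_1,\dots,f_m$ satisfy two or more independent relations modulo $g$, equivalently the prime ideal $P=\{v\in k[x_1,\dots,x_m]:g\mid v(f_1,\dots,f_m)\}$ has height at least two. Here the $w$ furnished by Corollary \ref{c1} may have $\overline r\neq0$, so one cannot simply upgrade it. Instead I observe that $P^2\subseteq\{v:g^2\mid v(f_1,\dots,f_m)\}$, since a product of two elements of $P$ is divisible by $g^2$, and thereby reduce (ii) to producing an \emph{irreducible} member of $P^2$. As $V(P)$ has codimension at least two, a general high-degree element of $P^2$ is irreducible by a Bertini-type irreducibility argument; this is exactly the point at which one invokes the irreducible-witness refinement of \cite{keller}, Lemma 3.2.b. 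Supplying and justifying that irreducible element is the crux of the argument, and once it is in hand the three conditions are equivalent.
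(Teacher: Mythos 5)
Your cycle gets two of its three edges. The implication (iii)$\Rightarrow$(i) is essentially correct and is a genuinely different route from the paper's: the paper never proves (iii)$\Rightarrow$(i) directly, but instead proves (iii)$\Rightarrow$(ii) by factoring $w=w_1w_2$ with $w_1$ irreducible and $g\mid w_1(f_1,\dots,f_m)$, following de Bondt--Yan \cite{BondtYan}, and gets (ii)$\Rightarrow$(i) from \cite{keller}. Your version (either some $s_i$ survives modulo $g$, so $(\ast)$ holds and Lemma \ref{l1}.a applies, or else $(\overline{f_1},\dots,\overline{f_m})$ annihilates $w$ and all its partials, which for square-free $w$ in characteristic zero forces $\trdeg_k k(\overline{f_1},\dots,\overline{f_m})\leqslant m-2$) is sound and more self-contained, \emph{provided} you actually prove the rank identity $\operatorname{rank}_L\overline{J}=\trdeg_k k(\overline{f_1},\dots,\overline{f_m})+\varepsilon$, where $\varepsilon=1$ exactly when $\overline{\nabla g}$ lies in the row space of $\overline{J}$; you assert it and use it repeatedly, but it is a lemma (via $\mathrm{Der}_k(L)\cong(\overline{\nabla g})^{\perp}$ and the characteristic-zero Jacobian criterion), not a quotation.

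The genuine gap is in (i)$\Rightarrow$(ii), and you have located it yourself. Your linear-algebra argument settles only the case $\operatorname{ht}P=1$; in the case $\operatorname{ht}P\geqslant 2$ your entire proof is the claim that $P^2$ contains an irreducible polynomial ``by a Bertini-type irreducibility argument,'' followed by the admission that supplying that element ``is the crux of the argument.'' That crux is exactly the content of the theorem in this case, and nothing in your text establishes it: Bertini-type irreducibility statements require hypotheses (no fixed component, system not composed with a pencil, behaviour over non-algebraically-closed $k$) that you neither formulate nor verify. Worse, attributing this to \cite{keller}, Lemma 3.2.b is a mis-citation: that lemma (the source of Corollary \ref{c1}) produces an irreducible $w$ with $g\mid w(f_1,\dots,f_m)$ \emph{and} $g\nmid(\partial w/\partial x_i)(f_1,\dots,f_m)$ --- a first-order condition on a witness in $P$ --- and says nothing about irreducible elements of $P^2$. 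The paper's proof then runs this witness through the derivation $d(h)=\jac^{f_1,\dots,f_{m-1},h}_{x_{j_1},\dots,x_{j_m}}$ together with Lemma \ref{l1} to force $g^2\mid w(f_1,\dots,f_m)$; the witness it produces generally lies in $P\setminus P^2$. For instance, with $f_1=x_1$, $f_2=x_1+x_1^2x_2$, $g=x_1$, one has $P=(x_1,x_2)$ and the theorem is witnessed by the linear polynomial $w=x_2-x_1$, since $w(f_1,f_2)=x_1^2x_2$; here $w\notin P^2$. So your reduction to ``irreducible element of $P^2$'' demands strictly more than the theorem needs and more than the cited lemma can deliver, and until you prove that existence statement (or replace it by the derivation argument of \cite{keller} and \cite{charpbases} that the paper actually uses), the equivalence is not established: with this edge broken you retain only (ii)$\Rightarrow$(iii)$\Rightarrow$(i).
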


\begin{proof}
Implication $\text{(ii)}\Rightarrow \text{(i)}$
was already proved in the proof of \cite{keller},
Theorem 4.1, $\text{(ii)}\Rightarrow \text{(i)}$.
Implication $\text{(ii)}\Rightarrow \text{(iii)}$ is trivial.

\medskip

\noindent
$\text{(i)}\Rightarrow \text{(ii)}$ \
We combine the arguments from proofs of \cite{keller},
Theorem 4.1, $\text{(i)}\Rightarrow \text{(ii)}$
and \cite{charpbases}, Theorem 3.6 $(\Rightarrow)$.
Assume that $g\mid\dgcd(f_1,\dots,f_m)$.
By Corollary \ref{c1}, $g\mid w(f_1,\dots,f_m)$ 
for some irreducible polynomial $w\in k[x_1,\dots,x_m]$.
We proceed like in \cite{keller}, using a derivation 
$d(f)=\jac^{f_1,\dots,f_{m-1},f}_{x_{j_1},\ldots,x_{j_m}}$
for arbitrary $j_1<\ldots<j_m$, instead of $d_n$, 
and applying Lemma \ref{l1} instead of 
\cite{keller}, Lemma 3.1. 

\medskip

\noindent
$\text{(iii)}\Rightarrow \text{(ii)}$ \
We apply the proof of \cite{BondtYan}, Theorem 2.1, 
$3)\Rightarrow 2)$. 
Assume that $g^2\mid w(f_1,\dots,f_m)$ 
for some square-free polynomial $w\in k[x_1,\dots,x_m]$,
so $w(f_1,\dots,f_m)=g^2h$, where $h\in k[x_1,\dots,x_n]$.
Then there exist polynomials $w_1,w_2\in k[x_1,\dots,x_m]$
such that $w=w_1w_2$, $w_1$ is irreducible and 
$g\mid w_1(f_1,\dots,f_m)$.
Then we proceed like in \cite{BondtYan}.
\end{proof}

As a consequence of Theorem \ref{t1} we obtain the following
generalization of \cite{keller}, Theorem 5.1 
and \cite{BondtYan}, Corollary 2.2.

\begin{tw}
\label{t2}
Let $A=k[x_1,\dots,x_n]$, where $k$ is a field
of characteristic zero.
Assume that $f_1,\dots,f_m\in A$ are algebraically
independent over $k$, where $m\in\{1,\dots,n\}$.
Put $R=k[f_1,\dots,f_m]$.
Then the following conditions are equivalent:

\medskip

\noindent
{\rm (i)} \
$\dgcd(f_1,\dots,f_m)\in k\setminus\{0\}$,

\medskip

\noindent
{\rm (ii)} \
$\Irr R\subset\Sqf A$,

\medskip

\noindent
{\rm (iii)} \
$\Sqf R\subset\Sqf A$.
\end{tw}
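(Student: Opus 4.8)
The plan is to reduce everything to Theorem \ref{t1} via the canonical identification of $R$ with a polynomial ring. First I would record the set-up: since $f_1,\dots,f_m$ are algebraically independent over $k$, the substitution map $\iota\colon k[x_1,\dots,x_m]\to R$, $w\mapsto w(f_1,\dots,f_m)$, is a $k$-algebra isomorphism. An isomorphism sends units to units and products to products, so it carries irreducible elements onto irreducible elements and square-free elements onto square-free elements. Hence every element of $\Irr R$ (resp. $\Sqf R$) has the form $w(f_1,\dots,f_m)$ with $w$ irreducible (resp. square-free) in $k[x_1,\dots,x_m]$, and conversely. Here it is crucial to keep square-freeness \emph{in $R$} (which $\iota$ preserves) separate from square-freeness \emph{in $A$}, which is exactly the content at issue.

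Next I would rephrase each condition as a statement about square factors in $A$. Since $A$ is a unique factorization domain, an element $a\in A$ fails to lie in $\Sqf A$ precisely when $g^2\mid a$ for some irreducible $g\in A$. Because the $f_i$ are algebraically independent we have $\dgcd(f_1,\dots,f_m)\neq 0$, so (i) fails exactly when some irreducible $g\in A$ divides $\dgcd(f_1,\dots,f_m)$ (rather than merely when the gcd is nonzero). By the previous paragraph, (ii) fails exactly when there exist an irreducible $w\in k[x_1,\dots,x_m]$ and an irreducible $g\in A$ with $g^2\mid w(f_1,\dots,f_m)$, and (iii) fails exactly when the same holds with $w$ only square-free.

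Then I would invoke Theorem \ref{t1}, which for a fixed irreducible $g$ asserts the equivalence of $g\mid\dgcd(f_1,\dots,f_m)$, of the existence of an irreducible $w$ with $g^2\mid w(f_1,\dots,f_m)$, and of the existence of a square-free $w$ with $g^2\mid w(f_1,\dots,f_m)$. I would close the cycle with three short implications. For $\text{(ii)}\Rightarrow\text{(i)}$ I argue the contrapositive: if (i) fails, some irreducible $g$ divides $\dgcd(f_1,\dots,f_m)$, so by Theorem \ref{t1} there is an irreducible $w$ with $g^2\mid w(f_1,\dots,f_m)$, giving an element of $\Irr R$ outside $\Sqf A$, so (ii) fails. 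For $\text{(i)}\Rightarrow\text{(iii)}$ I again use the contrapositive: if (iii) fails, some square-free element $w(f_1,\dots,f_m)$ of $R$ lies outside $\Sqf A$, so $g^2\mid w(f_1,\dots,f_m)$ with $g$ irreducible and $w$ square-free, whence $g\mid\dgcd(f_1,\dots,f_m)$ by Theorem \ref{t1} and (i) fails. Finally $\text{(iii)}\Rightarrow\text{(ii)}$ is immediate from the trivial inclusion $\Irr R\subset\Sqf R$, since an irreducible element cannot be written as $b^2c$ with $b$ a non-unit.

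I do not expect a genuine obstacle, since all the analytic content — the passage between divisibility of the differential gcd and the presence of a square factor of some $w(f_1,\dots,f_m)$ — is already packaged in Theorem \ref{t1}. The only delicate points are bookkeeping: verifying that $\iota$ transports irreducibility and square-freeness so that the sets $\Irr R$ and $\Sqf R$ correspond to irreducible and square-free $w$, and using algebraic independence of the $f_i$ to ensure $\dgcd(f_1,\dots,f_m)\neq 0$, so that condition (i) genuinely means that no irreducible element of $A$ divides the differential gcd.
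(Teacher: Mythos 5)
Your proof is correct and takes essentially the same route the paper intends: Theorem \ref{t2} is presented there as a direct consequence of Theorem \ref{t1}, and your identification of $\Irr R$ and $\Sqf R$ with the images of irreducible and square-free polynomials $w\in k[x_1,\dots,x_m]$ under the substitution isomorphism is precisely the content of Remark \ref{r2}. The cycle $\text{(i)}\Rightarrow\text{(iii)}\Rightarrow\text{(ii)}\Rightarrow\text{(i)}$ you spell out correctly supplies the bookkeeping the paper leaves implicit.
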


The above theorem allows us call conditions (ii) and (iii)
analogs of the Jacobian condition (i) for a subring $R$.

\begin{rem}
\label{r2}
Conditions (ii) and (iii) of Theorem \ref{t2} can be 
expressed in the following way, respetively:

\medskip

\noindent
{\rm (ii)} 
for every irreducible polynomial $w\in k[x_1,\dots,x_m]$
the polynomial $w(f_1,$ $\dots,$ $f_m)$ is square-free,

\medskip

\noindent
{\rm (iii)} 
for every square-free polynomial $w\in k[x_1,\dots,x_m]$
the polynomial $w(f_1,$ $\dots,$ $f_m)$ is square-free.
\end{rem}

\section{Square-factorially closed subrings}
\label{subrings}

We will present some basic observations concerning conditions 
(ii) and (iii) from Theorem \ref{t2} for a subring $R$ 
of an arbitrary unique factorization domain $A$.
Conjecture $\text{JC}(m,n,k)$ motivates us to state 
the following open question.

\medskip

\noindent
{\bf A general question.}
{\em Let $R$ be a subring of a domain $A$ such that
$$\Irr R\subset\Sqf A\quad\mbox{or}\quad\Sqf R\subset\Sqf A.$$
When $R$ is algebraically closed in $A$?}

\medskip

\begin{lm}
\label{l3}
Let $A$ be a unique factorization domain.
Let $R$ be a subring of~$A$ such that $R^{\ast}=A^{\ast}$.
Consider the following conditions:

\medskip

\noindent
{\rm (i)} \
$\Irr R\subset\Irr A$,

\medskip

\noindent
{\rm (ii)} \
$\Sqf R\subset\Sqf A$,

\medskip

\noindent
{\rm (iii)} \
$\Irr R\subset\Sqf A$.

\medskip

Then the following implications hold:
$$\text{\rm (i)}\Rightarrow \text{\rm (ii)}\Rightarrow
\text{\rm (iii)}.$$
\end{lm}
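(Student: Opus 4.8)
The plan is to prove the two implications separately. Note first that the chain has a trivial shortcut $\text{(i)}\Rightarrow\text{(iii)}$, since every irreducible element of the domain $A$ is automatically square-free in $A$, so $\Irr R\subset\Irr A\subset\Sqf A$; thus (iii) is the weakest of the three, and the real content lies in $\text{(i)}\Rightarrow\text{(ii)}$. Before starting I would record what the hypothesis $R^{\ast}=A^{\ast}$ provides: it says that $R$ and $A$ have the same units, so that (a) every non-unit of $R$ is a non-unit of $A$, and (b) every unit of $A$ already lies in $R$. Both facts are used below. The implication $\text{(ii)}\Rightarrow\text{(iii)}$ I would then dispose of at once: in any integral domain an irreducible $a$ is square-free, because a factorization $a=b^{2}c$ gives $a=b\cdot(bc)$, whence irreducibility forces $b$ or $bc$ to be a unit, and in either case $b$ is a unit. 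Hence $\Irr R\subset\Sqf R$, and composing with (ii) yields $\Irr R\subset\Sqf A$, which is (iii).

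For $\text{(i)}\Rightarrow\text{(ii)}$ the key idea is to import the prime factorization of $A$ into $R$. First I would prove that $R$ is atomic, i.e.\ every nonzero non-unit of $R$ is a finite product of irreducibles of $R$. This is where the ambient factoriality enters: let $\ell(x)$ denote the number of prime factors of $x$ in $A$ counted with multiplicity, an additive function vanishing exactly on units. By (a) a non-unit of $R$ has $\ell\geqslant 1$, so any repeated splitting $a=a_{1}a_{2}$ of an element of $R$ into non-units of $R$ strictly decreases $\ell$ on each factor while keeping it positive; hence such splittings terminate and $a$ is a product of finitely many irreducibles of $R$. Next, by hypothesis (i) each of these irreducibles lies in $\Irr A$, and in the UFD $A$ irreducible means prime; therefore an $R$-factorization $a=q_{1}\cdots q_{s}$ into atoms is, up to order and units, exactly the prime factorization of $a$ in $A$.

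With this dictionary in place I would finish as follows. Take $a\in\Sqf R$ and suppose, for contradiction, that $a\notin\Sqf A$. Then its prime factorization in $A$ repeats some prime, so two of the atoms above are $A$-associate, say $q_{1}=uq_{2}$ with $u$ a unit of $A$. By (b) we have $u\in R$, and then $a=q_{1}q_{2}\cdots q_{s}=u\,q_{2}^{2}\,(q_{3}\cdots q_{s})$ exhibits $q_{2}^{2}$ as a divisor of $a$ in $R$, with cofactor $u\,q_{3}\cdots q_{s}\in R$ and $q_{2}$ a non-unit of $R$. This contradicts $a\in\Sqf R$, so in fact $a\in\Sqf A$, which is (ii).

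I expect the main obstacle to be the factorization step rather than the final computation: a general subring of a UFD need not be a UFD, nor even atomic a priori, so one must argue that factorizations in $R$ behave well. The resolution is precisely the length bound coming from $A$ together with (i), which forces the atoms of $R$ to be primes of $A$. The one point where $R^{\ast}=A^{\ast}$ is indispensable is the passage from ``$A$-associate'' to ``associate inside $R$'', namely the assertion $u\in R$ in the last display; without (b) two associate atoms need not be associate in $R$, and the square factor could fail to descend, which is exactly the phenomenon ruling out the implication for subrings with extra units.
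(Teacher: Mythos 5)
Your proof is correct and follows essentially the same route as the paper: for $\text{(i)}\Rightarrow\text{(ii)}$ you factor a square-free element of $R$ into irreducibles of $R$, use (i) to view them as primes of $A$, and use $R^{\ast}=A^{\ast}$ to transfer (non-)associateness, while for $\text{(ii)}\Rightarrow\text{(iii)}$ you show $\Irr R\subset\Sqf R$ via the same factorization $a=b\cdot(bc)$ into two non-units that the paper uses. The one point where you go beyond the paper is your explicit atomicity argument via the length function $\ell$: the paper simply writes ``let $a=q_1\cdots q_r$ with $q_i\in\Irr R$ pairwise non-associated'' and takes the existence of such a factorization for granted, so your argument fills in a step the paper leaves implicit.
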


\begin{proof}
$\text{(i)}\Rightarrow \text{(ii)}$

Assume that $\Irr R\subset\Irr A$
and consider an element $a\in\Sqf R$.
Let $a=q_1\ldots q_r$, where $q_1,\dots,q_r\in\Irr R$
are pairwise non-associated in $R$.
Then, by the assumption, $q_1,\dots,q_r\in\Irr A$.
Moreover, since $A^{\ast}=R^{\ast}$,
$q_1,\dots,q_r$ are pairwise non-associated in $A$.

\medskip

\noindent
$\text{(ii)}\Rightarrow \text{(iii)}$

Assume that $\Sqf R\subset\Sqf A$
and consider an element $a\in\Irr R$.
Suppose that $a=b^2c$, where
$b\in R\setminus R^{\ast}$ and $c\in R$.
Then $a=b\cdot (bc)$, where $b,bc\in R\setminus R^{\ast}$,
a contradiction.
Hence, $a\in\Sqf R$.
\end{proof}

Recall that a subring $R$ of a ring $A$ is called
{\em factorially closed} in $A$ if the following implication:
$$xy\in R\setminus\{0\}\Rightarrow x,y\in R$$
holds for every $x,y\in A$.
The ring of constants of any locally nilpotent derivation
of a domain of characteristic zero is factorially closed.
We refer the reader to \cite{DaigleBook} and 
\cite{FreudenburgBook} for more information about
locally nilpotent derivations.

\begin{lm}
\label{l4}
Let $A$ be a unique factorization domain.
Let $R$ be a subring of~$A$ such that $R^{\ast}=A^{\ast}$.
The following conditions are equivalent:

\medskip

\noindent
{\rm (i)} \
$\Irr R\subset\Irr A$,

\medskip

\noindent
{\rm (ii)} \
$R$ is factorially closed in $A$.
\end{lm}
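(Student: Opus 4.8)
The plan is to prove the two implications separately, using unique factorization in $A$ together with the fact that the assumption $R^*=A^*$ makes the invertible elements of $R$ and $A$ coincide; in particular every unit of $A$ lies in $R$ (so $A^*\subseteq R$), and an element of $R$ is a non-unit in $R$ precisely when it is a non-unit in $A$.

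For $\text{(ii)}\Rightarrow\text{(i)}$ I would argue directly from the definitions, with no factorization theory. Take $a\in\Irr R$. Then $a\neq 0$ and $a$ is a non-unit of $R$, hence a non-unit of $A$ (were $a$ a unit of $A$ it would lie in $A^*=R^*$, contradicting $a\in\Irr R$). Given any factorization $a=xy$ in $A$, factorial closedness forces $x,y\in R$, since $a=xy\in R\setminus\{0\}$; irreducibility of $a$ in $R$ then makes one of $x,y$ a unit of $R$, thus a unit of $A$. Therefore every factorization of $a$ in $A$ has a unit factor, i.e.\ $a\in\Irr A$.

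For $\text{(i)}\Rightarrow\text{(ii)}$ the first—and I expect the main—step is to observe that $R$ is automatically atomic. Writing $\ell(a)$ for the number of prime factors of $a\in A\setminus\{0\}$ counted with multiplicity, $\ell$ is additive and vanishes exactly on the units of $A$. Since a non-unit of $R$ is a non-unit of $A$, any factorization of an element of $R$ into two non-units of $R$ strictly lowers $\ell$ on each factor, so by induction on $\ell$ every nonzero non-unit of $R$ is a finite product of elements of $\Irr R$. With atomicity in hand, let $c=xy\in R\setminus\{0\}$ with $x,y\in A$. If $c$ is a unit, then $x,y$ are units of $A$, hence lie in $A^*\subseteq R$. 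Otherwise factor $c=w_1\cdots w_s$ with $w_i\in\Irr R$; by hypothesis (i) each $w_i\in\Irr A$, so this is a prime factorization of $c$ in the UFD $A$. Since $x\mid c$, unique factorization in $A$ yields $x=u\,w_{i_1}\cdots w_{i_t}$ for a unit $u\in A^*\subseteq R$ and a sub-collection of the $w_i$; as each factor lies in $R$, so does $x$, and symmetrically $y\in R$. Hence $R$ is factorially closed.

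The crux is the atomicity observation supplied by the length function $\ell$: without it one cannot split $c$ into $R$-irreducibles, and the pull-back of the divisor $x$ through unique factorization in $A$ would break down. Everything else is bookkeeping with $\ell$ and with the identification of the units of $R$ and $A$ provided by $R^*=A^*$.
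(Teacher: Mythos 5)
Your proof is correct and takes essentially the same route as the paper's: in both, $(\text{ii})\Rightarrow(\text{i})$ is the direct definitional argument, and $(\text{i})\Rightarrow(\text{ii})$ factors $xy\in R\setminus\{0\}$ into $R$-irreducibles $q_1\cdots q_r$, notes these are $A$-irreducible by (i), and uses unique factorization in $A$ together with $A^{\ast}=R^{\ast}$ to write $x$ and $y$ as units times sub-products, hence elements of $R$. The only difference is that you explicitly verify atomicity of $R$ with the length function $\ell$ (and treat the unit case separately), a step the paper's proof uses implicitly when it writes $xy=q_1\ldots q_r$ with $q_i\in\Irr R$.
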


\begin{proof}
$\text{(i)}\Rightarrow \text{(ii)}$

Assume that $\Irr R\subset\Irr A$ and consider elements
$x,y\in A$ such that $xy\in R\setminus\{0\}$.
Let $xy=q_1\ldots q_r$, where $q_1,\dots,q_r\in\Irr R$.
Then $q_1,\dots,q_r\in\Irr A$,
so without loss of generality we may assume that
$x=aq_1\ldots q_s$ and $y=bq_{s+1}\ldots q_r$
for some $a,b\in A^{\ast}$.
Since $A^{\ast}=R^{\ast}$, we infer that $x,y\in R$.

\medskip

\noindent
$\text{(ii)}\Rightarrow \text{(i)}$

Assume that condition (ii) holds and consider
an element $a\in\Irr R$.
Let $a=bc$, where $b,c\in A$.
By the assumption, $b,c\in R$,
so $b\in R^{\ast}$ or $c\in R^{\ast}$.
Hence, $b\in A^{\ast}$ or $c\in A^{\ast}$.
\end{proof}

Note the following easy fact.

\begin{lm}
\label{l5}
Let $A$ be a domain, let $R$ be a subring of $A$.
The following conditions are equivalent:

\medskip

\noindent
{\rm (i)} \
$R_0\cap A=R$,

\medskip

\noindent
{\rm (ii)} \
for every $x\in R$, $y\in A$,
if $xy\in R\setminus\{0\}$, then $y\in R$.
\end{lm}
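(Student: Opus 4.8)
The plan is to unwind both conditions through the quotient field $R_0$ of $R$, viewed inside the quotient field of the domain $A$. Since $R\subseteq R_0$ and $R\subseteq A$, the inclusion $R\subseteq R_0\cap A$ holds automatically, so condition (i) is equivalent to the reverse inclusion $R_0\cap A\subseteq R$; that is, every element of $A$ that can be written as a quotient of two elements of $R$ already lies in $R$. Once (i) is read this way, each implication becomes a direct translation between the ``quotient'' formulation and the ``product'' formulation in (ii).

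For $\text{(i)}\Rightarrow\text{(ii)}$ I would take $x\in R$, $y\in A$ with $xy\in R\setminus\{0\}$ and set $r=xy$. Since $r\neq 0$ and $A$ is a domain, $x\neq 0$, so I may write $y=r/x$ with $r,x\in R$, which places $y$ in $R_0$. As $y\in A$ by hypothesis, I get $y\in R_0\cap A=R$, the desired conclusion.

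For $\text{(ii)}\Rightarrow\text{(i)}$ I would take $z\in R_0\cap A$ and show $z\in R$. If $z=0$ this is immediate, so I assume $z\neq 0$ and write $z=a/b$ with $a,b\in R$ and $b\neq 0$, giving $bz=a\in R$. Because $b\neq 0$, $z\neq 0$ and $A$ is a domain, the product $a=bz$ is nonzero, so $bz\in R\setminus\{0\}$; applying (ii) to the pair $x=b\in R$, $y=z\in A$ yields $z\in R$.

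I do not expect a genuine obstacle here, since the statement is essentially a reformulation of the same closure property. The only point demanding care is the nonvanishing bookkeeping in the second implication: I must dispose of the trivial case $z=0$ separately and invoke that $A$ is a domain to be sure the relevant products stay in $R\setminus\{0\}$ rather than merely in $R$, so that condition (ii) is applicable.
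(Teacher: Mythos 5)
Your proof is correct. Note that the paper itself offers no proof of Lemma \ref{l5} at all --- it is introduced only with the words ``Note the following easy fact'' --- so your argument fills in exactly the routine verification the authors left to the reader: reading condition (i) as the inclusion $R_0\cap A\subseteq R$, translating between the quotient formulation ($y=r/x\in R_0$) and the product formulation ($bz=a\in R\setminus\{0\}$), and using that $A$ is a domain to keep the relevant products nonzero so that (ii) applies. This is the standard and, as far as one can tell, intended argument; there is nothing to add or correct.
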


\begin{tw}
\label{t3}
Let $A$ be a unique factorization domain.
Let $R$ be a subring of $A$ such that
$R^{\ast}=A^{\ast}$ and $R_0\cap A=R$.
The following conditions are equivalent:

\medskip

\noindent
{\rm (i)} \
$\Sqf R\subset\Sqf A$,

\medskip

\noindent
{\rm (ii)} \
for every $x\in A$, $y\in\Sqf A$,
if $x^2y\in R\setminus\{0\}$, then $x,y\in R$.
\end{tw}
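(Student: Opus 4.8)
The plan is to prove the two implications separately, with $\text{(ii)}\Rightarrow\text{(i)}$ being short and $\text{(i)}\Rightarrow\text{(ii)}$ carrying the main weight. For $\text{(ii)}\Rightarrow\text{(i)}$ I would argue by contraposition. Take $a\in\Sqf R$ with $a\neq 0$ and suppose $a\notin\Sqf A$. Since $A$ is a unique factorization domain, $a$ admits a square-free decomposition $a=x^2y$ with $y\in\Sqf A$ and $x\in A$; because $a$ is not square-free in $A$, some prime occurs in $a$ to a power at least $2$, so $x$ is a non-unit of $A$. Now $x^2y=a\in R\setminus\{0\}$, so hypothesis (ii) gives $x,y\in R$. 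As the units of $R$ are units of $A$, the element $x$ is a non-unit of $R$ as well, so $a=x^2y$ exhibits $a\notin\Sqf R$, a contradiction; hence $a\in\Sqf A$.

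For $\text{(i)}\Rightarrow\text{(ii)}$, given $x\in A$, $y\in\Sqf A$ with $a:=x^2y\in R\setminus\{0\}$, I would induct on the number $\Omega(x)$ of prime factors of $x$ in $A$, counted with multiplicity. In the base case $\Omega(x)=0$ the element $x$ is a unit of $A$, hence of $R$ since $R^{\ast}=A^{\ast}$, so $x\in R$ and $y=ax^{-2}\in R$. For the inductive step assume $x$ is a non-unit. Then $x^2\mid a$ shows $a\notin\Sqf A$, so the contrapositive of (i) gives $a\notin\Sqf R$, that is $a=b^2c$ with $b,c\in R$ and $b$ a non-unit of $R$; using $R^{\ast}=A^{\ast}$ once more, $b$ is a non-unit of $A$, so $\Omega(b)\geq 1$.

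The crux is to compare the two factorizations $x^2y=a=b^2c$ inside the unique factorization domain $A$. Comparing, for each prime $p$, the exponent $v_p$ on both sides and using that $y$ is square-free (so $p$ divides $y$ to order at most $1$) forces, by a parity argument, $v_p(b)\leq v_p(x)$ for every $p$; hence $b\mid x$ in $A$. Writing $x=bx'$ and cancelling $b^2$ in the domain $A$ yields $c=x'^2y$, where $\Omega(x')<\Omega(x)$ because $\Omega(b)\geq 1$. Since $c=x'^2y\in R\setminus\{0\}$ with $y\in\Sqf A$, the induction hypothesis gives $x',y\in R$, and then $x=bx'\in R$, completing the descent. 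The main obstacle, and the only place where the square-freeness of $y$ is genuinely used, is precisely securing the divisibility $b\mid x$; once that parity argument is in place the induction is routine. The hypothesis $R^{\ast}=A^{\ast}$ is what keeps $b$ a genuine non-unit factor in $A$ and thus makes the descent well-founded, while the condition $R_0\cap A=R$, equivalent to the divisibility criterion of Lemma~\ref{l5}, is available should one prefer to recover $x^2,y\in R$ from quotients rather than by the explicit factorization above.
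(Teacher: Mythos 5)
Your proof is correct, and while it rests on the same engine as the paper's --- a descent on the number of irreducible factors of $x$, powered by the contrapositive of (i) and a parity comparison of exponents in the UFD $A$ --- it is organized differently, and the difference is substantive. The paper splits into cases: if $x\in R$ it gets $y\in R$ from Lemma~\ref{l5} (i.e., from the hypothesis $R_0\cap A=R$); if $x\notin R$ it takes a minimal counterexample, writes $x^2y=z^2t$ with $z,t\in R$, $z\notin R^{\ast}$, decomposes the \emph{cofactor} as $t=u^2v$ with $v\in\Sqf A$, applies minimality to $u$, and reaches the contradiction $x=czu\in R$. You instead prove the divisibility $b\mid x$ directly from $x^2y=b^2c$ (your parity step is sound: $2v_p(b)\leqslant 2v_p(x)+v_p(y)\leqslant 2v_p(x)+1$ forces $v_p(b)\leqslant v_p(x)$ since $2v_p(b)$ is even), cancel $b^2$ to get $c=(x/b)^2y$, and apply the induction hypothesis to $x/b$; your version also makes explicit the well-foundedness of the descent ($\Omega(x/b)<\Omega(x)$ because $\Omega(b)\geqslant 1$), which the paper leaves implicit when it invokes "minimality of $x$" for $u$. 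The payoff of your arrangement is that $y\in R$ is produced at the base of the induction (where $x$ is a unit, so $y=(x^2y)x^{-2}\in R$) and propagated upward, so Lemma~\ref{l5} is never invoked: your argument proves the equivalence of (i) and (ii) for \emph{any} subring $R$ of a UFD $A$ with $R^{\ast}=A^{\ast}$, which is formally stronger than Theorem~\ref{t3} as stated --- the hypothesis $R_0\cap A=R$ enters the paper's proof only through its handling of the case $x\in R$. The direction $\text{(ii)}\Rightarrow\text{(i)}$ in your proposal coincides with the paper's.
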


\begin{proof}
$\text{(i)}\Rightarrow \text{(ii)}$

Assume that $\Sqf R\subset\Sqf A$ and consider $x,y\in A$
such that $y\in\Sqf A$ and $x^2y\in R\setminus\{0\}$.
If $x\in R$, then $x^2\in R$, and hence $y\in R$
by Lemma~\ref{l5}.

\medskip

Now suppose that $x\not\in R$ and $x$ is a minimal element
(with respect to a~number of irreducible factors in $A$)
with this property.
In this case $x\not\in A^{\ast}$, so $x^2y\not\in\Sqf A$,
and then $x^2y\not\in\Sqf R$.
Hence, $x^2y=z^2t$ for some $z,t\in R$, $z\not\in R^{\ast}$.
We can present $t$ in the form $t=u^2v$,
where $u,v\in A$, $v\in\Sqf A$.
We have $u^2v\in R\setminus\{0\}$,
so $u\in R$ by the minimality of $x$.
We obtain $x^2y=z^2u^2v$, where $y,v\in\Sqf A$,
hence $x=czu$ for some $c\in A^{\ast}$.
By the assumptions, $c\in R$, so $x\in R$, a contradiction.

\medskip

\noindent
$\text{(ii)}\Rightarrow \text{(i)}$

Assume that condition (ii) holds.
Consider an element $r\in\Sqf R$.
Suppose that $r\not\in\Sqf A$, so $r=x^2y$ for some
$x,y\in A$ such that $x\not\in A^{\ast}$ and $y\in\Sqf A$.
Since $x^2y\in R\setminus\{0\}$, we obtain that $x,y\in R$.
We have $x\not\in R^{\ast}$, so $x^2y\not\in\Sqf R$,
a contradiction.
\end{proof}

\begin{df}
Let $A$ be a UFD.
A subring $R$ of $A$ such that the implication
$$x^2y\in R\setminus\{0\}\;\Rightarrow\;x,y\in R$$
holds for every $x\in A$, $y\in\Sqf A$,
will be called {\em square-factorially closed} in~$A$.
\end{df}

Recall that a subring $R$ of a ring $A$ is called
{\em root closed} \ in $A$ if the following implication:
$$x^n\in R\;\Rightarrow\;x\in R$$
holds for every $x\in A$ and $n\geqslant 1$.
The properties of root closed subrings were investigated
in many papers, see \cite{Anderson} and \cite{BrewerCMcC}
for example.

\begin{tw}
\label{t4}
Let $A$ be a unique factorization domain.
Let $R$ be a subring of $A$ such that
$R^{\ast}=A^{\ast}$ and $R_0\cap A=R$.
If $R$ is square-free closed in $A$,
then $R$ is root closed in $A$.
\end{tw}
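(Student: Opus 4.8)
The plan is to prove root closedness by induction on the number of irreducible factors of $x$ in $A$ (counted with multiplicity), using the square-factorially closed property to peel off squares and Lemma~\ref{l5} to cancel factors already lying in $R$. Since root closedness is exactly the implication ``$x^n\in R$ for some $n\geqslant 1\Rightarrow x\in R$'', I would fix $x\in A$ and induct on $N$, the number of irreducible factors of $x$. If $N=0$ then $x\in A^{\ast}=R^{\ast}\subset R$, which settles the base case; so I assume $N\geqslant 1$ and that the claim holds for every element of $A$ having fewer than $N$ irreducible factors.

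Given that $x^n\in R$ for some $n$, I would first choose $n\geqslant 1$ minimal with this property, and use minimality to dispose of even exponents. Indeed, if $n$ were even then $x^n=(x^{n/2})^2\cdot 1$ with $1\in\Sqf A$ and $x^n\in R\setminus\{0\}$, so the square-factorially closed property would give $x^{n/2}\in R$, contradicting minimality of $n$. Hence either $n=1$, in which case $x\in R$ and we are done, or $n$ is odd with $n\geqslant 3$.

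The heart of the argument is the odd case $n=2m+1$, $m\geqslant 1$. Writing $x=w^2v$ with $v\in\Sqf A$ the square-free part of $x$ (available in any UFD), I would rewrite $x^n=(x^m)^2x=(x^mw)^2v$, where $v\in\Sqf A$ and $(x^mw)^2v=x^n\in R\setminus\{0\}$. The square-factorially closed property then yields $x^mw\in R$ and $v\in R$. Since $x^mw=w^{2m+1}v^m=w^nv^m$ and $v^m\in R$ with $v^m\neq 0$, Lemma~\ref{l5} applied to $v^m\in R$ and $w^n\in A$ gives $w^n\in R$. Because $x=w^2v$ and $N\geqslant 1$, the element $w$ has strictly fewer irreducible factors than $x$, so the induction hypothesis gives $w\in R$; together with $v\in R$ this yields $x=w^2v\in R$, completing the induction.

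The main obstacle is arranging the descent so that it terminates: halving the exponent in the even case does not lower the number of irreducible factors, while the square-free decomposition in the odd case does lower it, but only after $w^n$ has been extracted cleanly. The point is that the two moves cooperate — minimality of the exponent eliminates the even case, after which the odd case always strictly reduces the number of irreducible factors — and that both hypotheses are used precisely here: the square-factorially closed property produces $x^mw\in R$ and $v\in R$, and the condition $R_0\cap A=R$ (through Lemma~\ref{l5}) is exactly what is needed to pass from $w^nv^m\in R$ and $v\in R$ to $w^n\in R$.
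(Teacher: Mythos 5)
Your proof is correct, and its core mechanism coincides with the paper's: both rest on the identity $x^n=(x^mw)^2v$ for $x=w^2v$ with $v\in\Sqf A$ and $n=2m+1$ (the paper writes this as $x^{2l+1}=\big(t^{2l+1}s_m^l\big)^2s_m$, and $t^{2l+1}s_m^l$ is exactly your $x^mw$), both then invoke square-factorial closedness to obtain $x^mw,v\in R$, and both cancel $v^m\in R$ via Lemma~\ref{l5} to reach $w^n\in R$ (the paper's $t^{2l+1}\in R$). Where you genuinely diverge is the induction scaffolding. The paper reduces the exponent by writing $n=2^r(2l+1)$ and peeling off squares (taking $y=1$) to get $x^{2l+1}\in R$, then fixes the full iterated square-free decomposition $x=s_0^{2^m}s_1^{2^{m-1}}\cdots s_{m-1}^2s_m$ and inducts on its depth $m$, handing the induction hypothesis the element $t=s_0^{2^{m-1}}\cdots s_{m-1}$ with the \emph{same} odd exponent $2l+1$. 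You instead dispose of even exponents by taking $n$ minimal (an equivalent move), and you run a strong induction on the number of irreducible factors of $x$, which needs only a single square-free split $x=w^2v$ rather than the iterated decomposition; it is harmless that $w$ inherits the possibly larger exponent $n$, since your induction hypothesis quantifies over all exponents. Your bookkeeping is arguably more elementary, and it mirrors the device the paper itself uses in the proof of Theorem~\ref{t3} (minimality with respect to the number of irreducible factors in $A$); what the paper's version buys is an induction intrinsic to the decomposition, with the exponent frozen throughout. One cosmetic point: say explicitly that you may assume $x\neq 0$ (the case $x=0$ being trivial), since both the factor count $N$ and the requirement $x^n\in R\setminus\{0\}$ presuppose it; the paper makes this restriction explicit.
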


\begin{proof}
Assume that $R$ is square-free closed in $A$.
Consider an element $x\in A$, $x\neq 0$,
such that $x^n\in R$ for some $n\geqslant 1$.
Let $n=2^r(2l+1)$, where $r,l\geqslant 0$.
Observe first that since $(x^{2l+1})^{2^r}\in R$,
then applying Theorem~\ref{t3} we obtain
$x^{2l+1}\in R$.

\medskip

Now, note that $x$ can be presented in the form
$x=s_0^{2^m}s_1^{2^{m-1}}\ldots s_{m-1}^2s_m$,
where $s_0,\dots,s_m\in \Sqf A$ and $m\geqslant 0$.
We will show by induction on $m$ that
the following implication:
$$x^{2l+1}\in R\;\Rightarrow\;x\in R$$
holds for every $x\in A$, $x\neq 0$.
Let $m>0$ and assume that the above implication holds for $m-1$.
Put $t=s_0^{2^{m-1}}s_1^{2^{m-2}}\ldots s_{m-1}$, so $x=t^2s_m$
and $x^{2l+1}=(t^{2l+1}s_m^l)^2s_m$.
If $x^{2l+1}\in R$,
then $t^{2l+1}s_m^l,s_m\in R$ by Theorem~\ref{t3},
so $t^{2l+1}\in R$ by Lemma~\ref{l5}.
Hence, $t\in R$ by the induction assumption
and, finally, $x\in R$.
\end{proof}

\bigskip

\noindent
{\bf Acknowledgements.}
The authors would like to thank Prof.\ Holger Brenner
for valuable discussions on the matter of this paper.

\end{document}